\title{Homomorphism reductions on Polish groups}
\author{Konstantinos A. Beros}
\date{}
\subjclass[2010]{%
03E15, 
54H11, 
22A05. 
57S05, 
}
\address{Department of Mathematics, University of North Texas, General Academics Building 435, 1155 Union Circle, \#311430, Denton, TX 76203-5017}
\email{beros@unt.edu}
\theoremstyle{plain}
\newtheorem{theorem}{Theorem}[section]
\newtheorem{lemma}[theorem]{Lemma}
\newtheorem{corollary}[theorem]{Corollary}
\newtheorem*{theorem*}{Theorem}
\newtheorem*{corollary*}{Corollary}
\newtheorem*{proposition*}{Proposition}
\theoremstyle{definition}
\newtheorem{definition}[theorem]{Definition}
\newtheorem{example}[theorem]{Example}
\theoremstyle{remark}
\newtheorem*{remark*}{Remark}
\numberwithin{equation}{section}
\def\concat{{}^\smallfrown}
\def\upto{\upharpoonright}
\def\len#1{\lvert #1 \rvert}
\def\supp{{\rm supp}}
\def\id{{\rm id}}
\def\NN{{\mathbb N}}
\def\ZZ{{\mathbb Z}}
\def\QQ{{\mathbb Q}}
\def\BS{{\mathcal N}}
\def\leqg{\leq_{\rm g}}
\begin{document}



\begin{abstract}
In an earlier paper, we introduced the following pre-order on the subgroups of a given Polish group:  if $G$ is a Polish group and $H,L \subseteq G$ are subgroups, we say $H$ is {\em homomorphism reducible} to $L$ iff there is a continuous group homomorphism $\varphi : G \rightarrow G$ such that $H = \varphi^{-1} (L)$.  We previously showed that there is a $K_\sigma$ subgroup, $L$, of the countable power of any locally compact Polish group, $G$, such that every $K_\sigma$ subgroup of $G^\omega$ is homomorphism reducible to $L$.  In the present work, we show that this fails in the countable power of the group of increasing homeomorphisms of the unit interval.
\end{abstract}

\maketitle



\section{Introduction}\label{S1}

In the study of definable equivalence relations, the equivalence relations arising as coset equivalence relations on Polish groups have played a key role.  For instance, the equivalence relation $E_0$ on $2^\omega$, given by
\[
x E_0 y \iff x(n) = y(n), \mbox{ for all but finitely many } n
\]
may be regarded as the coset equivalence relation of the subgroup
\[
\mathsf{Fin} = \{ x \in 2^\omega : x(n) = 0 \mbox{, for all but finitely many } n\}.
\]
of the Polish group $2^\omega$, where the group operation is bitwise addition mod 2.  Indeed, many important equivalence relations arise in this way, e.g., $E_1$, $E_2$, $E_3$, and $\ell^\infty$.  

The most important means of comparing equivalence relations is the quasiorder of {\em Borel reducibility}.  That is, if $E$ and $F$ are equivalence relations on Polish spaces $X$ and $Y$, respectively, one says that $E$ is {\em Borel reducible} to $F$ iff there is a Borel map $\varphi : X \rightarrow Y$ such that, for all $x, y \in X$, 
\[
x E y \iff \varphi (x) F \varphi (y).
\]
The map $\varphi$ is called a {\em Borel reduction} of $E$ to $F$.
The present work and our earlier paper \cite{BEROS.universal} are motivated by the observation that, in many cases, Borel reductions between coset equivalence relations can be witnessed by continuous group homomorphisms between appropriate groups.  For instance, let
\[
H_1 = \{ y \in 2^{\omega \times \omega} : (\forall^\infty m) (\forall m) (y(m,n) = 0)\},
\]
and let $E_1$ be the coset relation of $H_1$.  It is well-known that $E_0$ is Borel reducible to $E_1$.  A Borel reduction witnessing this is $\varphi : 2^\omega \rightarrow 2^{\omega \times \omega}$, given by
\[
\varphi (x) (m,n) = \varphi (x) (m),
\]
for each $x \in \ZZ_2^\omega$ and $m,n \in \omega$.  Note that, in fact, $\mathsf{Fin} = \varphi^{-1} [H_1]$, i.e., $\varphi$ is a continuous reduction of the subgroup $\mathsf{Fin}$ to $H_1$.  Furthermore, the map $\varphi$ induces an injective homomorphism
\[
\tilde\varphi : 2^\omega / \mathsf{Fin} \rightarrow 2^{\omega \times \omega} / H_1
\]
such that the diagram 
\[
\begin{array}{ccc}
2^\omega & \xrightarrow{\ \varphi \ } & 2^{\omega \times \omega}\\
\hspace{1.4ex}\left\downarrow {\scriptstyle p} \rule[0ex]{0ex}{2.5ex} \right .& & \hspace{1.4ex}\left\downarrow { \scriptstyle q}\rule[0ex]{0ex}{2.5ex}\right .\\
2^\omega / \mathsf{Fin} & \xrightarrow{\;\ \tilde\varphi \ \;} & \ZZ_2^{\omega \times \omega} / H_1
\end{array}
\]
commutes.  Here $p$ and $q$ denote the appropriate factor maps.

Abstracting from such specific examples leads to the following definition, already introduced in our paper \cite{BEROS.universal}.

\begin{definition}
Let $G$ be a Polish group.  For subgroups $H$ and $L$ of $G$, we say that $H$ is {\em group homomorphism reducible} to $L$ iff there is a continuous group homomorphism $\varphi : G \rightarrow G$ such that $H = \varphi^{-1} [L]$.  As a shorthand, we write $H \leqg L$.
\end{definition}

As an alternative formulation, observe that $H \leqg L$ iff there is an embedding of $G/H$ into $G/L$ with a lifting which is a continuous endomorphism of $G$.  Viewed in this light, our definition is not without precedent.  For instance, Farah \cite{FARAH.liftings} considers maps between factors of Boolean algebras with liftings which are algebra homomorphisms.

The $K_\sigma$ equivalence relations play an important role in the general theory of equivalence relations, e.g., $E_0$, $E_1$ and $\ell^\infty$ are all $K_\sigma$.  (Recall that a relation is $K_\sigma$ if it is the countable union of compact sets).  In fact, much study has been done of such relations.  See for instance \cite{ROSENDAL.cofinal}.  In light of this, we directed much of our attention to the $K_\sigma$ subgroups of Polish groups in \cite{BEROS.universal}.  We showed that there are $\leqg$-maximal $K_\sigma$ subgroups of $G^\omega$.  We referred to such $\leqg$-maximal $K_\sigma$ subgroups as {\em universal $K_\sigma$ subgroups} of $G^\omega$.  The equivalence relation analog is that of a Borel-complete $K_\sigma$ equivalence relation.  Though the requirement of universality is much stronger.  For instance, $\ell^\infty$ is a Borel-complete $K_\sigma$ equivalence relation, but it is not universal in our sense.

As a counterpoint, we showed that there are no universal $K_\sigma$ subgroups of $S_\infty$.  That is, for any $K_\sigma$ subgroup, $H$, of $S_\infty$, there is a $K_\sigma$ subgroup, $L \subseteq S_\infty$, such that $L \neq \varphi^{-1} [H]$, for every continuous homomorphism $\varphi : S_\infty \rightarrow S_\infty$.  Since $S_\infty$ continuously embeds its own countable power, it followed immediately that $S_\infty^\omega$ has no universal $K_\sigma$ subgroup.

The cases in which universal $K_\sigma$ subgroups do not exist are interesting in that they indicate a degree of rigidity of the group in question.  That is, if $G$ does not have a universal $K_\sigma$ subgroup, there is, in some sense, a shortage of continuous endomorphisms of $G$.  It is worth remarking upon the fact that $S_\infty$ does have $\leqg$-maximal analytic subgroups.  Thus, the rigidity which precludes universal $K_\sigma$ subgroups of $S_\infty$ is not sufficient to rule out universal analytic subgroups.

In the vein of our earlier results, we prove here an analogous theorem for the transformation group, $H_+([0,1])$, of increasing homeomporphisms of the unit interval.

\begin{theorem}\label{T1}
There is no universal $K_\sigma$ subgroup of $H_+([0,1])$.
\end{theorem}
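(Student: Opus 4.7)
The plan is to adapt the approach of \cite{BEROS.universal} for $S_\infty$: given an arbitrary $K_\sigma$ subgroup $L \subseteq H_+([0,1])$, produce a $K_\sigma$ subgroup $H$ with $H \ne \varphi^{-1}[L]$ for every continuous homomorphism $\varphi : H_+([0,1]) \to H_+([0,1])$. The driving intuition is that $H_+([0,1])$ is rigid enough as a topological group that continuous endomorphisms are too constrained for the family $\{\varphi^{-1}[L]\}_\varphi$ to exhaust the $K_\sigma$ subgroups.

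The first step is to set up the diagonalization arena. Fix a sequence $(I_n)_{n \in \omega}$ of pairwise disjoint closed subintervals of $[0,1]$ accumulating at $0$ and let $G_n \le H_+([0,1])$ be the subgroup of homeomorphisms supported in $I_n$. Each $G_n$ is a topological copy of $H_+([0,1])$, distinct $G_n$ pairwise commute, and assembling appropriately controlled tuples into a single homeomorphism yields a continuous embedding $H_+([0,1])^\omega \hookrightarrow H_+([0,1])$; this realizes $H_+([0,1])$ as containing its own countable power, mirroring the role of the embedding $S_\infty^\omega \hookrightarrow S_\infty$ in the earlier argument.

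The second step is to exploit rigidity of $H_+([0,1])$ to localize continuous endomorphisms. Since $H_+([0,1])$ is an abstractly simple Polish group (by the classical theorems of Mather and Epstein on homeomorphism groups of manifolds), any non-trivial continuous endomorphism $\varphi$ is injective, and one expects still sharper geometric control --- continuous homomorphisms of $H_+([0,1])$ should be classified, up to routine operations, by conjugation-like maps determined by data on the interval. Writing $L = \bigcup_m C_m$ as an increasing union of compacts, the target is to show that for every continuous $\varphi$ there is a canonically associated sequence of proper $K_\sigma$ subgroups $M_n^\varphi \subseteq G_n$ such that $\varphi^{-1}[L] \cap \prod_n G_n \subseteq \prod_n M_n^\varphi$. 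With this localization in hand, a diagonal construction --- enumerate the countably many parameters controlling $\varphi$, and at stage $n$ pick $h_n \in G_n$ outside the associated $M_n^\varphi$ --- yields a $K_\sigma$ subgroup $H$ (generated by the $h_n$ together with sufficient compact ballast to make it visibly $K_\sigma$) that differs from every $\varphi^{-1}[L]$.

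The main obstacle is the structural localization just sketched. In the $S_\infty$ case the analogous control stems from the fact that a continuous homomorphism is pinned down by its action on a countable family of transpositions, which is combinatorially concrete; $H_+([0,1])$ admits no such combinatorial basis, so the required control must instead be wrung from the action of $\varphi$ on small-support commutators, on germs at the accumulation point, or on centralizers of fragmented elements. Identifying a geometric invariant that, for each continuous $\varphi$, supplies the majorizing envelope $M_n^\varphi$ --- and verifying that only countably many such envelopes need to be defeated for the diagonalization to close --- is where the core technical work must be concentrated.
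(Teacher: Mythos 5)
There is a genuine gap, and it sits exactly where you place your ``main obstacle'': the structural localization of continuous endomorphisms is neither proved nor plausible in the form you need, and the diagonalization built on it cannot close. First, the appeal to simplicity is off: $H_+([0,1])$ is not abstractly simple (the homeomorphisms that are the identity near $0$ form a proper nontrivial normal subgroup), and nothing forces a nontrivial continuous endomorphism to be injective. Second, and more seriously, your scheme requires that only \emph{countably many} envelopes $M_n^\varphi$ need to be defeated, i.e.\ that the continuous endomorphisms are controlled by countably many parameters. There is no justification for this; the endomorphism monoid contains, at a minimum, the uncountable family of inner automorphisms, and no combinatorial basis (like the transpositions in $S_\infty$) is available to reduce it to countable data. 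A stage-by-stage diagonalization over endomorphisms therefore cannot be set up as described.

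The paper's proof defeats all nontrivial endomorphisms \emph{simultaneously} with a single compactly generated subgroup, and the countable family it diagonalizes against comes from the candidate universal subgroup $H$, not from the endomorphisms. Concretely: by Arzel\`a--Ascoli, compact subsets of $H_+([0,1])$ are exactly the closed equicontinuous sets, so $H \subseteq \bigcup_m K_{\beta_m}$ for countably many moduli $\beta_m$. One fixes $\sigma_1$ with dense conjugacy class (Glasner--Weiss) supported on $[1/2,3/4]$, a homeomorphism $h$ conjugating $\sigma_n$ to $\sigma_{n+1}$ on a chain of intervals accumulating at $1$, and a compact set $L_\alpha$ of products of powers $\sigma_n^i$ with $i \leq \alpha(n)$, where $\alpha$ outgrows all iterates $\beta_i^j$. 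For any nontrivial continuous $\varphi$, the dense conjugacy class forces $\varphi\sigma_1 \neq \id$, so some point is displaced by a definite $1/k$; $\beta$-continuity of $\varphi h$ propagates a displacement $\geq 1/\beta^{n-1}(k)$ to $\varphi\sigma_n$, and a pigeonhole on the orbit $x_n, \varphi\sigma_n(x_n), \ldots, \varphi\sigma_n^{\alpha(n)}(x_n)$ in $[0,1]$ produces two consecutive points at distance $\leq 1/\alpha(n) < 1/\beta^n(k)$ whose images under $\varphi\sigma_n^{-i}$ are $\geq 1/\beta^{n-1}(k)$ apart, contradicting $\varphi(L_\alpha) \subseteq K_\beta$. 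Your disjoint-supports picture and the intuition that equicontinuity-type rigidity is the engine are both on target, but without the quantitative equicontinuity argument (or some proved substitute for your ``classification'' of endomorphisms) the proposal does not yield a proof.
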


The proof of this result is in Section~\ref{S3}.  As an aside, it follows from our earlier paper \cite[Theorem~1.7]{BEROS.universal} that there is an $F_\sigma$ subgroup of $H_+([0,1])$ of which every $K_\sigma$ subgroup is a continuous homomorphic pre-image.  By Theorem~\ref{T1}, such a ``universal $F_\sigma$ subgroup for $K_\sigma$'' cannot be $K_\sigma$ itself.

A direct corollary of Theorem~\ref{T1} is

\begin{corollary}
There is no universal $K_\sigma$ subgroup of $H_+([0,1])^\NN$.
\end{corollary}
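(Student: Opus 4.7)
The plan is to deduce the corollary from Theorem~\ref{T1} by transferring a hypothetical universal $K_\sigma$ subgroup of $\Hplus^\NN$ down to $\Hplus$ itself. The key tool I would use is an injective continuous group homomorphism $\Theta : \Hplus^\NN \to \Hplus$, which I would construct as follows: partition $[0,1)$ into the intervals $I_n = [1 - 2^{-n}, 1 - 2^{-n-1}]$ for $n \geq 0$, let $\phi_n : [0,1] \to I_n$ be the increasing affine bijection, and for each $g = (g_n) \in \Hplus^\NN$, define $\Theta(g)$ to be the homeomorphism of $[0,1]$ that fixes $1$ and acts on each $I_n$ as $\phi_n \circ g_n \circ \phi_n^{-1}$. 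Because $\Theta(g)$ preserves each $I_n$ setwise, $\Theta$ is a group homomorphism, and the restriction of $\Theta(g)$ to $I_n$ determines $g_n$, so $\Theta$ is injective.

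Continuity of $\Theta$ should reduce to a standard tail argument: the diameter of $I_n$ is $2^{-n-1}$, so on the $I_n$ with large $n$ the value of $\Theta(g)$ is automatically close to that of $\Theta(g')$, while for each fixed small $n$ uniform convergence of $g_n$ controls the image. A symmetric argument applied to $g \mapsto g^{-1}$ handles continuity in the Polish group topology of $\Hplus$ (which requires uniform convergence of both the map and its inverse). This is the only step requiring genuine computation; everything after is formal.

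With $\Theta$ in hand, the deduction is short. Suppose toward a contradiction that $L \subseteq \Hplus^\NN$ is a universal $K_\sigma$ subgroup, and set $L^* = \Theta[L] \subseteq \Hplus$; this is $K_\sigma$ as a continuous homomorphic image of $L$. Let $\iota : \Hplus \to \Hplus^\NN$ be the first-coordinate embedding $h \mapsto (h, \id, \id, \ldots)$, which is a continuous injective group homomorphism. For any $K_\sigma$ subgroup $H \subseteq \Hplus$, the image $\iota[H]$ is $K_\sigma$ in $\Hplus^\NN$, so by universality of $L$ there is a continuous homomorphism $\Phi : \Hplus^\NN \to \Hplus^\NN$ with $\Phi^{-1}[L] = \iota[H]$. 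Setting $\varphi = \Theta \circ \Phi \circ \iota : \Hplus \to \Hplus$ and using the injectivity of $\Theta$ (so $\Theta^{-1}[L^*] = L$), we have
\[
\varphi^{-1}[L^*] = \iota^{-1}\bigl[\Phi^{-1}[\Theta^{-1}[L^*]]\bigr] = \iota^{-1}\bigl[\Phi^{-1}[L]\bigr] = \iota^{-1}[\iota[H]] = H.
\]
Hence $L^*$ would be a universal $K_\sigma$ subgroup of $\Hplus$, contradicting Theorem~\ref{T1}. The main obstacle in the whole argument is thus merely the verification that the piecewise construction of $\Theta$ genuinely yields a continuous endomorphism in the Polish group topology of $\Hplus$.
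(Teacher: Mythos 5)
Your proposal is correct and follows essentially the same route as the paper: the paper likewise embeds $\Hplus^\NN$ into $\Hplus$ as the closed subgroup of homeomorphisms fixing every point $1/n$ (your $\Theta$ onto the interval-by-interval action is the same device), embeds $\Hplus$ back into $\Hplus^\NN$ (the paper's $\psi$, your $\iota$), and composes the two with the reduction supplied by universality to push a hypothetical universal $K_\sigma$ subgroup down to $\Hplus$, contradicting Theorem~\ref{T1}.
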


\begin{proof}
Assume Theorem~\ref{T1} holds.  Observe that $H_+([0,1])^\NN$ is isomorphic to the closed subgroup 
\[
C = \{ f \in H_+([0,1]) : (\forall n) (f(1/n) = 1/n)\}
\]
of $H_+([0,1])$.  Let $\varphi : H_+([0,1])^\NN \rightarrow C$ be an isomorphism and let $\psi$ be an isomorphism between $H_+([0,1])$ and a closed subgroup of $H_+([0,1])^\NN$.  Suppose, towards a contradiction, that $H_+([0,1])^\NN$ has a universal $K_\sigma$ subgroup $H$.  Let $\tilde H$ be the $K_\sigma$ sugbroup $\varphi (H)$ of $H_+([0,1])$.  Fix an arbitrary $K_\sigma$ subgroup $L$ of $H_+([0,1])$.  Note that $\psi (L)$ is a $K_\sigma$ subgroup of $H_+([0,1])^\NN$ and hence, by the universality of $H$, there is a continuous endomorphism $\rho$ of $H_+([0,1])^\NN$ such that $\psi (L) = \rho^{-1} (H)$.  This implies that $L = (\varphi \circ \rho \circ \psi)^{-1} (H)$.  As $L$ was arbitrary, this shows that $\varphi (H)$ is a universal $K_\sigma$ subgroup of $H_+([0,1])$, contradicting Theorem~\ref{T1}.
\end{proof}

In our study of $K_\sigma$ subgroups from \cite{BEROS.universal}, we also considered situations in which every $K_\sigma$ subgroup of a given Polish group is compactly generated.  This is true in $\ZZ^\omega$, fo instance.  In the final section of this paper, we expand upon these ideas and describe a large class of groups in which every $K_\sigma$ is group homomorphism reducible to a compactly generated subgroup, i.e., the compactly generated subgroups are $\leqg$-cofinal in the $K_\sigma$ subgroups.



\section{Preliminaries and notation}\label{S2}

The group $H_+([0,1])$ of increasing homeomorphisms of $[0,1]$ is a Polish group when equipped with the topology of uniform convergence, i.e., the relative topology inherited from $C([0,1])$.  It is a peculiarity of $H_+([0,1])$ that this topology coincides with the topology of pointwise convergence.  In the present setting, the latter is much easier to manipulate.  The basic open sets in $H_+([0,1])$ are thus of the form
\[
U = \{ f \in H_+([0,1]) : (\forall i \leq k) (f (r_i) \in I_i)\},
\]
where $r_0 , \ldots , r_k \in [0,1]$ and $I_0 , \ldots , I_k \subseteq [0,1]$ are open intervals.

For $f \in H_+([0,1])$, let $\supp (f) = \{ x \in [0,1] : f(x) \neq x\}$ be the {\em support} of $f$.

Let $\BS$ denote the Baire space, i.e., $\NN^\NN$ with the product topology.  For $\alpha \in \BS$ and a finite sequence $s$ of natural numbers, $\alpha \supset s$ indicates that $s$ is an initial segment of $\alpha$.

\begin{definition}
Given $f \in H_+([0,1])$ and $\alpha \in \BS$, define $f$ to be {\em $\alpha$-continuous}, iff, for each $k \in \NN$ with $k \geq 1$, if $x,y \in [0,1]$ are such that $\lvert x - y \rvert < 1 / \alpha (k)$, then $\lvert f(x) - f(y) \rvert < 1/k$.
\end{definition}

Recall that a family $F \subseteq H_+ ([0,1])$ is {\em equicontinuous} iff there is an $\alpha \in \BS$ such that each $f \in F$ is $\alpha$-continuous.  For each $\alpha \in \BS$, let
\[
K_\alpha = \{ f \in H_+([0,1]) : f,f^{-1} \mbox{ are $\alpha$-continuous}\}
\]
and note that, by definition, each $K_\alpha$ is equicontinuous.  

\begin{lemma}
Suppose $f_0 , f_1 , \ldots \in H_+([0,1])$ and each $f_n$ is $\alpha$-continuous.  If $f_n \rightarrow f$ pointwise, then $f$ is also $\alpha$-continuous.
\end{lemma}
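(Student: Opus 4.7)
The plan is a direct passage to the pointwise limit in the defining inequality for $\alpha$-continuity. I would fix $k \geq 1$ and points $x, y \in [0,1]$ with $\lvert x - y \rvert < 1/\alpha(k)$, and aim to deduce $\lvert f(x) - f(y) \rvert < 1/k$.

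By the $\alpha$-continuity of each $f_n$, the bound $\lvert f_n(x) - f_n(y) \rvert < 1/k$ holds for every $n$. Since $f_n(x) \to f(x)$ and $f_n(y) \to f(y)$ by pointwise convergence, for any $\epsilon > 0$ and sufficiently large $n$ the triangle inequality gives
\[
\lvert f(x) - f(y) \rvert \leq \lvert f(x) - f_n(x) \rvert + \lvert f_n(x) - f_n(y) \rvert + \lvert f_n(y) - f(y) \rvert < \tfrac{1}{k} + 2\epsilon,
\]
so that $\lvert f(x) - f(y) \rvert \leq 1/k$ after letting $\epsilon \to 0$.

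The main (and essentially only) remaining point is to upgrade the bound from $\leq$ to the strict inequality demanded by the definition. I would handle this by exploiting the strictness of the hypothesis $\lvert x - y \rvert < 1/\alpha(k)$: perturb $(x,y)$ to a pair $(x', y') \in [0,1]^2$ with $\lvert x - y \rvert < \lvert x' - y' \rvert < 1/\alpha(k)$ (choosing the perturbation to spread the points slightly apart, shifting away from the endpoints of $[0,1]$ if necessary), apply the non-strict bound established above at $(x', y')$, and transfer back to $(x,y)$ using that $f$ is non-decreasing (inherited from the monotonicity of each $f_n$). This strict-versus-non-strict discrepancy is the one subtle point; otherwise the argument is a routine limit exchange.
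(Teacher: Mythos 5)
Your proposal is correct and follows essentially the same route as the paper: pass to the pointwise limit to obtain the non-strict bound, then recover strictness by comparing with a slightly more separated pair of points and using the monotonicity of $f$. One small caution: in the final transfer you need $f$ to be \emph{strictly} increasing (which holds since $f \in H_+([0,1])$), not merely non-decreasing as you wrote --- a non-decreasing $f$ would only yield $\lvert f(x) - f(y)\rvert \leq \lvert f(x') - f(y')\rvert \leq 1/k$ and the strict inequality would be lost; the paper invokes strict monotonicity explicitly at exactly this step.
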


\begin{proof}
Fix $k \in \NN$ and $x,y \in [0,1]$ such that $\lvert x - y \rvert < 1/\alpha (k)$.  Assume that $k , \alpha (k) >1$, as otherwise the desired conclusion is automatic.  Assume that $x < y$ and, since $1/\alpha(k) \leq 1/2$, we may assume that $x \neq 0$ or $y \neq 1$.  These two cases are analogous, so assume $y \neq 1$.  Let $y' > y$ be such that $\lvert x - y' \rvert < 1/\alpha (k)$.  For each $n$, $\lvert f_n (x) - f_n (y') \rvert < 1/k$ and hence 
\[
\lvert f(x) - f(y') \rvert = \lim_n \lvert f_n (x) - f_n (y')\rvert \leq 1/k.
\]
Since $f$ is a strictly increasing function, 
\[
\lvert f(x) - f(y) \rvert < \lvert f(x) - f(y') \rvert \leq 1/k.
\]
This completes the proof.
\end{proof}

Each $K_\alpha$ is thus closed and, since the $K_\alpha$ are also equicontinuous, it follows from the Arzel\`a-Ascoli Theorem (see \cite[\S 7.10]{ROYDEN.real.analysis}) that each $K_\alpha$ is compact.  Conversely,

\begin{lemma}
If $K \subseteq H_+([0,1])$ is compact, then there exists $\alpha \in \BS$ such that each $f \in K$ is $\alpha$-continuous.
\end{lemma}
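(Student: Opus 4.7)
The plan is to deduce equicontinuity of $K$ from compactness via the Arzel\`a--Ascoli theorem, and then convert the equicontinuity modulus into an element of $\BS$.

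First, I would invoke the fact, noted in the preliminaries, that the topology on $H_+([0,1])$ is the subspace topology inherited from $C([0,1])$ with the uniform norm. Thus a compact subset $K \subseteq H_+([0,1])$ is also compact in $C([0,1])$, and since $K$ is pointwise bounded (all its members take values in $[0,1]$), the Arzel\`a--Ascoli theorem tells us that $K$ is uniformly equicontinuous: for every $k \geq 1$, there exists $\delta_k > 0$ such that whenever $x,y \in [0,1]$ with $\lvert x - y \rvert < \delta_k$, then $\lvert f(x) - f(y) \rvert < 1/k$ for every $f \in K$.

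The second step is to extract the desired $\alpha$. I would define $\alpha \in \BS$ by setting $\alpha(0) = 1$ and, for $k \geq 1$, choosing $\alpha(k)$ to be any natural number strictly greater than $1/\delta_k$. Then $1/\alpha(k) < \delta_k$, so the hypothesis $\lvert x - y \rvert < 1/\alpha(k)$ forces $\lvert f(x) - f(y) \rvert < 1/k$ for all $f \in K$. This is exactly $\alpha$-continuity of each $f \in K$.

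If one preferred not to cite Arzel\`a--Ascoli, the equicontinuity step admits a direct proof: for each $k$, cover $K$ by finitely many uniform balls $B_i = \{ g \in H_+([0,1]) : \sup_x \lvert g(x) - f_i(x) \rvert < 1/(3k)\}$, use uniform continuity of each $f_i$ on the compact interval $[0,1]$ to obtain a common $\delta_k > 0$, and then estimate $\lvert g(x) - g(y) \rvert$ for $g \in B_i$ by the usual three-$\varepsilon$ inequality. I do not anticipate a real obstacle here; the statement is essentially a repackaging of the ``compact $\Rightarrow$ equicontinuous'' direction of Arzel\`a--Ascoli, and the only care needed is in translating the modulus $\delta_k$ into a sequence $\alpha$ of natural numbers.
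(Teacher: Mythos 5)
Your proof is correct, but it takes a different route from the paper's. You deduce equicontinuity from compactness by passing to the uniform topology on $C([0,1])$ and citing the necessity direction of Arzel\`a--Ascoli (or, in your alternative, by the standard total-boundedness-plus-three-$\varepsilon$ argument, which is really the proof of that direction); the conversion of the modulus $\delta_k$ into $\alpha(k) > 1/\delta_k$ is then immediate, and your remark that pointwise boundedness holds is harmless but not actually needed for this direction. The paper instead argues entirely within the topology of pointwise convergence, which it has singled out as the convenient one for $H_+([0,1])$: for each $k$ it builds a chain of overlapping intervals $I_0, \ldots, I_n$ with $\lvert b_{i+2} - a_i \rvert < 1/k$, covers $H_+([0,1])$ by the basic open sets $U_r = \{ f : f(r(i)) \in I_i \}$ indexed by increasing tuples $r$, extracts a finite subcover of $K$, and takes $\varepsilon_k$ to be the smallest gap among the finitely many tuples used; monotonicity of the $f \in K$ then forces $\lvert f(x) - f(y) \rvert < 1/k$ whenever $\lvert x - y \rvert < \varepsilon_k$. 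Your approach is shorter and leans on a named theorem (which the paper already invokes for the converse lemma, so this is stylistically consistent); the paper's approach is self-contained, exploits the order structure of $H_+([0,1])$, and illustrates the covering technique with pointwise-evaluation basic open sets that recurs later in the paper. Both are valid proofs of the lemma.
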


\begin{proof}
Fix $k \in \NN$, it suffices to determine a value for $\alpha (k)$ which is sufficiently large that $\lvert f(x) - f(y) \rvert < 1/k$, whenever $f \in K$ and $\lvert x-y \rvert < 1/ \alpha (k)$.  Indeed, fix open intervals $I_0 , \ldots , I_n \subseteq (0,1)$ which cover $(0,1)$ and such that, if each $I_i = (a_i , b_i)$, then 
\[
a_i < a_{i+1} < b_i, 
\]
for each $i \leq n-1$.  Moreover, choose the $I_i$ such that $\lvert b_{i+2} - a_i\rvert < 1/k$, for each $i \leq n-2$.  For each increasing tuple $r = \langle r(0) , r(1) , \ldots , r(n) \rangle \in (0,1)^{n+1}$, define the open set
\[
U_r = \{ f \in H_+([0,1]) : (\forall i \leq n) (f(r(i)) \in I_i\}
\]
and observe that the $U_r$ cover $H_+([0,1])$.  Since $K$ is compact, there exist $r_0 < r_1 < \ldots < r_p$ such that $U_{r_0} , \ldots , U_{r_p}$ cover $K$.  Let 
\[
\varepsilon_k = \min ( \{ \lvert r_j (i) - r_j (i+1) \rvert : i < n \ \& \ j \leq p\})
\]
and note that, for each $f \in K$ and $x,y \in [0,1]$, if $\lvert x - y \rvert < \varepsilon_k$, then $\lvert f(x) - f(y) \rvert < 1/k$.  Choose $\alpha (k) \geq 1/\varepsilon_k$.  This completes the proof.
\end{proof}

If $K \subseteq H_+([0,1])$ is compact, then so is $\{ f^{-1} : f \in K\}$.  It thus follows from the last lemma that $K \subseteq K_\alpha$, for some $\alpha \in \BS$.

In the interest of clarity, if $f,g$ are functions, $fg$ througout denotes the composite function $f \circ g$ and $f^n$ dentotes the $n$-fold iterate of $f$ (for $n \in \NN$).  Regarding elements of $\BS$ as functions on $\NN$, this notation applies equally to members of the Baire space.  Similarly, if $f \in H_+([0,1])$ and $\varphi : H_+([0,1]) \rightarrow H_+([0,1])$ is an endomorphism, $\varphi f$ denotes the image of $f$ under $\varphi$, i.e., $\varphi (f)$.

The following facts are direct consequences of the definitions of $\alpha$-continuity and the $K_\alpha$.
\begin{enumerate}
\item If $f$ is $\alpha$-continuous and $g$ is $\beta$-continuous, then $f g$ is $\beta \alpha$-continuous.
\item If $f \in K_\alpha$ and $g \in K_\beta$, then $fg \in K_{\max(\alpha \beta , \beta \alpha)}$, where $\max (\alpha \beta , \beta \alpha) \in \BS$ is such that $\max (\alpha \beta , \beta \alpha) (k) = \max (\alpha \beta (k) , \beta \alpha (k))$, for each $k \in \NN$.
\item If $f \in K_\alpha$ and $\lvert x-y \rvert \geq 1/k$, for some $k$, then $\lvert f(x) - f(y)\rvert \geq 1/\alpha (k)$.
\end{enumerate}

Finally, for $A \subseteq H_+([0,1])$, let $\langle A \rangle$ designate the subgroup generated by $A$.



\section{Proof of Theorem~\ref{T1}}\label{S3}

\begin{lemma}
There is an increasing homeomorphism $\sigma : [0,1] \rightarrow [0,1]$ such that $\sigma$ is supported on $[1/2 , 3/4]$ and the conjugacy class of $\sigma$ is dense.
\end{lemma}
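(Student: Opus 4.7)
The strategy is to build $\sigma$ with a carefully designed pattern of ``bumps'' inside $[1/2,3/4]$ and to exploit the elementary extension principle for $H_+([0,1])$: every order-preserving bijection between finite subsets of $(0,1)$ extends (for instance by piecewise linear interpolation) to an element of $H_+([0,1])$. Combined, these ingredients reduce density of the conjugacy class of $\sigma$ to a combinatorial problem of realizing prescribed finite order types using pairs $(t, \sigma(t))$.

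For the construction, fix a Cantor-like closed set $C \subseteq [1/2,3/4]$ containing $1/2$ and $3/4$, and enumerate the components of $[1/2,3/4] \setminus C$ as $\{(a_n, b_n) : n \in \NN\}$. On each $[a_n, b_n]$ define $\sigma$ to be an increasing self-homeomorphism fixing the endpoints of either \emph{up-bump} type ($\sigma(x) > x$ on $(a_n, b_n)$) or \emph{down-bump} type ($\sigma(x) < x$); on $C$ and on $[0,1] \setminus [1/2, 3/4]$, let $\sigma$ be the identity. Since the bumps shrink toward $C$, $\sigma$ is continuous and $\supp(\sigma) \subseteq [1/2, 3/4]$. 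I would assign signs and bump shapes so that, inside every open subinterval $J \subseteq (1/2, 3/4)$, for every sign $\varepsilon \in \{+,-\}$ and every $\eta > 0$, $J$ contains bumps of type $\varepsilon$ of length less than $\eta$, with shapes varying over a dense family; a standard bookkeeping argument over a diagonal enumeration of countably many rational parameters achieves this.

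For density of the conjugacy class, fix a basic open neighborhood $U = \{g \in H_+([0,1]) : g(r_i) \in I_i, \, i \le k\}$ of some $f \in H_+([0,1])$, with $0 < r_0 < \ldots < r_k < 1$ and open intervals $I_i \subseteq (0,1)$. Pick $s_i \in I_i$ with $s_0 < \ldots < s_k$ in general position, so that $r_0, s_0, \ldots, r_k, s_k$ are pairwise distinct. It suffices to find $t_0 < \ldots < t_k$ in $(1/2, 3/4)$ such that the interleaved tuple $(t_0, \sigma(t_0), \ldots, t_k, \sigma(t_k))$ has the same sorted order type in $(0,1)$ as $(r_0, s_0, \ldots, r_k, s_k)$; the extension principle then produces $h \in H_+([0,1])$ with $h(t_i) = r_i$ and $h(\sigma(t_i)) = s_i$, whence $h \sigma h^{-1}(r_i) = s_i \in I_i$ for every $i$, i.e., $h \sigma h^{-1} \in U$.

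The main obstacle is realizing the prescribed order type, which I would tackle by induction on $i$. At step $i$, having placed $t_0, \ldots, t_{i-1}$, the target order type prescribes an open subinterval of $(1/2, 3/4)$ into which $t_i$ must fall and an open window into which $\sigma(t_i)$ must land, together with the sign of $\sigma(t_i) - t_i$ (which equals that of $s_i - r_i$). Density in $(1/2, 3/4)$ of bumps of both signs at all scales supplies a bump of the correct sign of sufficiently small length; the flexibility in bump shape then allows one to position $t_i$ inside it so that $\sigma(t_i)$ hits the required window. A mild subtlety is that certain patterns force several consecutive $t_j$'s to share a single bump --- for instance, $r_0 < r_1 < s_0 < s_1$ forces $t_0 < t_1 < \sigma(t_0) < \sigma(t_1)$ inside one up-bump --- and these cases are accommodated by choosing bumps whose shift fraction is close enough to $1$.
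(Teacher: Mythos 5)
Your proposal is correct in outline but takes a genuinely different route from the paper. The paper does not construct $\sigma$ by hand: it quotes the Glasner--Weiss theorem that $H_+([0,1])$ has a dense conjugacy class, picks such a $\sigma$, transplants it into $[1/2,3/4]$ via a linear rescaling $h$, and then shows the transplanted element $\tau$ still has dense conjugacy class by pulling each basic open set $U$ back under a piecewise linear $g$ that maps $[1/2,3/4]$ over the constraint points, and invoking the density of the original conjugacy class. Your argument is self-contained --- it builds the element explicitly (bumps of both signs accumulating on a Cantor set) and proves density directly from the finite-extension principle, in effect reproving the Glasner--Weiss fact for this group --- at the cost of the combinatorial bookkeeping, which the paper avoids entirely by outsourcing it to the cited result.

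Two spots in your sketch deserve to be shored up, though neither is fatal. First, when several indices are forced into a single bump (because the intervals $[\min(r_j,s_j),\max(r_j,s_j)]$ interleave transitively), a single bump can only serve them if they all have the same displacement sign; this is in fact automatic, since if $r_i<r_j$, $s_i<s_j$, $r_i<s_i$ and $s_j<r_j$, then $[r_i,s_i]$ and $[s_j,r_j]$ cannot meet --- but you should say so, as the whole block decomposition hinges on it. Second, your closing remark that linked patterns are handled by ``shift fraction close to $1$'' only covers the pattern $r_0<\dots<r_m<s_0<\dots<s_m$; a general shuffle such as $r_0<r_1<s_0<r_2<s_1<s_2$ requires a bump whose graph passes through a prescribed finite set of points above (or below) the diagonal. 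This is exactly what your ``dense family of shapes'' is for: realizing a given finite order type is an open condition on the bump in the uniform topology, every such type is realized by some piecewise linear bump, so a countable dense family of shapes, repeated at all scales and locations, realizes them all. Making that connection explicit, and then placing the blocks greedily in left-to-right order in suitably chosen bumps, completes your argument.
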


\begin{proof}
Glasner-Weiss \cite{GLASNER-WEISS.topological.rokhlin} have shown that $H_+([0,1])$ contains a dense conjugacy class.  Therefore, fix $\sigma \in H_+([0,1])$ such that the conjugacy class of $\sigma$ is dense.  The objective of the proof is to modify $\sigma$ and produce $\tau$ which is supported on $[1/2,3/4]$ and still has dense conjugacy class.

Let $h : [0,1] \rightarrow [1/2 , 3/4]$ be a linear bijection.  Define $\tau \in H_+([0,1])$ by 
\[
\tau(x) = \begin{cases}
h \sigma h^{-1} (x) & \mbox{if } x \in [1/2 , 3/4],\\
x & \mbox{otherwise}.
\end{cases}
\]
It remains to show that the conjugacy class of $\tau$ is dense in $H_+([0,1])$.  To this end, fix a basic open set $U \subseteq H_+([0,1])$ with $0 < r_0 < \ldots < r_k < 1$ and nonempty open intervals $I_0 , \ldots , I_k \subseteq (0,1)$ such that 
\[
U = \{ f \in H_+([0,1]) : (\forall i \leq k) (f(r_i) \in I_i)\}.
\]
With no loss of generality, $\sup ( I_i )< 1$ and $\inf (I_i) > 0$, for each $i \leq k$, as this only shrinks the open set $U$.  Fix $a,b \in (0,1)$ such that $0 < a < r_0$, $r_k < b < 1$, $a < \min_{i \leq k} ( \inf (I_i))$ and $b > \max_{i \leq k} (\sup (I_i))$.  Let $g$ be the piecewise linear map defined such that the graph of $g$ has vertices $( 0,0 ) , ( 1/2 , a ) , ( 3/4 , b ) ,  ( 1,1 )$.  Note that $g \in H_+([0,1])$.

For each $i \leq k$, let $s_i = h^{-1} g^{-1} (r_i)$ and $J_i = h^{-1} g^{-1} (I_i)$.  Since the conjugacy class of $\sigma$ is dense, there exists $f \in H_+([0,1])$ such that $f \sigma f^{-1} (s_i) \in J_i$, for each $i \leq k$.  Define $f_1 \in H_+([0,1])$ by 
\[
f_1 (x) = \begin{cases}
hfh^{-1} (x) & \mbox{if } x \in [1/2 , 3/4],\\
x & \mbox{otherwise}.
\end{cases}
\]
To complete the proof, it suffices to show that $g f_1 \tau f_1^{-1}g^{-1} \in U$.  Fix $i \leq k$ and observe that 
\begin{align*}
gf_1 \tau f_1^{-1}g^{-1} (r_i)
&= g f_1 \tau f_1^{-1} h(s_i)\\
&= g h f h^{-1} h \sigma h^{-1} h f^{-1} h^{-1} h (s_i)\\
&= g h f \sigma f^{-1} (s_i)\\
&\in g h (J_i)\\
&= I_i.
\end{align*}
This completes the proof.
\end{proof}

Let $\sigma_1$ be as in the previous lemma, with dense conjugacy class and $\supp (\sigma_1) \subseteq [1/2 , 3/4]$.  Define $h \in H_+([0,1])$ such that the graph of $h$ is polygonal and the vertices of $h$ are the elements of the set
\[
\{ ( 1- 2^{-n} , 1 - 2^{-n-1} ) : n \in \NN, n \geq 1\}
\]
of points in $[0,1]^2$, i.e., $h$ has vertices $( 1/2 , 3/4 ) , ( 3/4 , 7/8 ) , \ldots$.  To simplify notation, let $I_n$ denote the interval $[1- 2^{-n} , 1 - 2^{-n-1}]$, for each $n \in \NN$.  In other words, $I_0 = [0,1/2]$, $I_1 = [1/2 , 3/4]$, and so forth.  For $n\geq 1$, define $\sigma_n = h^{n-1} \, \sigma_1 \, h^{1-n}$ and observe that $\sigma_n$ is supported on the interval $I_n$.

Given $\alpha \in \BS$, define $L_\alpha \subseteq H_+([0,1])$ to be the set of those $f \in H_+([0,1])$ such that $f \upto I_0 = \id \upto I_0$ and, for each $n \geq 1$, $f \upto I_n = \sigma_n^i \upto I_n$, for some $i \leq \alpha (n)$.  If $f \in L_\alpha$ is the (unique) member of $L_\alpha$ such that $f \upto I_n = \sigma_n^{\delta(n)} \upto I_n$, for a given $\delta \leq \alpha$, then denote $f$ by $f_\delta$.  Observe that $L_\alpha$ is a continuous image of the compact space 
\[
X_\alpha = \prod_n \{ 0, \ldots , \alpha(n)\}, 
\]
via the map $\delta \mapsto f_\delta$.  Each $L_\alpha$ is therefore compact.

In fact, the map $\delta \mapsto f_\delta$ is a homeomorphism between $X_\alpha$ and $L_\alpha$ with its subspace topology.  It follows that the sets 
\[
U_s = \{ f_\delta : \delta \supset s\},
\]
with $s$ a finite sequence bounded by $\alpha$, form a basis for the relative topology on $L_\alpha$.

Given a $K_\sigma$ subgroup $H \subseteq H_+([0,1])$, the objective of the proof is to show that $H$ is not a universal subgroup.  For this it is sufficient to show that there exists a compact set $L \subseteq H_+([0,1])$ such that, for every non-trivial continuous endomorphism $\varphi$ of $H_+([0,1])$, the image of $\langle L \rangle$ under $\varphi$ is not contained in $H$ and hence $\varphi$ is not a homomorphism reduction of $\langle L \rangle$ to $H$.  Note that restriction to non-trivial endomorphisms of $H_+([0,1])$ is appropriate since, if $\langle L \rangle = \varphi^{-1} (H)$, then $\ker (\varphi) \subseteq \langle L \rangle$ and hence $\ker (\varphi) \neq H_+([0,1])$, as otherwise $H_+([0,1]) = \langle L \rangle$, which is impossible because $H_+([0,1])$ is not itself $K_\sigma$.  

To further simplify the argument, assume that $H$ is of the form $\bigcup_m K_{\beta_m}$, where $K_{\beta_m}$ is defined as in Section~\ref{S2}.  It is possible to make this assumption as $H$ is contained in a set of the form $\bigcup_m K_{\beta_m}$.  In addition, assume that $\{ \beta_m : m \in \NN\}$ is closed under composition and taking pointwise maxima, i.e., for each $m,n$ there exist $p,q$ such that $\beta_m \beta_n = \beta_p$ and $\max (\beta_m , \beta_n) = \beta_q$.  The only purpose of these assumptions is to guarantee that, for any $m,n$, there exists $p$ such that if $f \in K_{\beta_m}$ and $g \in K_{\beta_n}$, then $fg , gf \in K_{\beta_p}$.

Observe that these simplifying assumptions introduce no loss of generality, since they only enlarge the set $\bigcup_m K_{\beta_m}$.

Henceforth, fix $\beta_0 , \beta_1 , \ldots \in \BS$ as above.  Choose $\alpha \in \BS$ such that, for each $n \in \NN$,
\[
\alpha (n) = \max ( \{ \beta_i^j (k) : i,j,k \leq n\}) + 1.
\]
To complete the proof, it suffices to show that there is no continuous endomorphism of $H_+([0,1])$ which maps the compactly generated subgroup $\langle \{ h \} \cup L_\alpha \rangle$ into $\bigcup_m K_{\beta_m}$.  (Here $h$ is the map described above such that $\sigma_{n+1} = h \sigma_n h^{-1}$.)  Indeed, suppose that, on the contrary, there exists an endomorphism $\varphi$ of $H_+([0,1])$ such that $\varphi$ maps $\langle \{ h \} \cup L_\alpha \rangle$ into $\bigcup_m K_{\beta_m}$.

\begin{lemma}
There exists $m$ such that $\varphi (\{ h \} \cup L_\alpha) \subseteq K_{\beta_m}$.
\end{lemma}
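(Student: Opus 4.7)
The plan is to combine a Baire category argument on the compact space $X_\alpha$ with a finite ``tail correction'' trick that exploits the product structure of $L_\alpha$.

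First I would focus on $L_\alpha$ alone (handling the single element $h$ separately at the end). For each $m$, let
\[
B_m = \{ f \in L_\alpha : \varphi(f) \in K_{\beta_m}\}.
\]
Each $B_m$ is closed in $L_\alpha$, since $\varphi$ is continuous and, as shown in Section~\ref{S2}, each $K_{\beta_m}$ is closed. Because $L_\alpha \subseteq \langle \{h\} \cup L_\alpha\rangle$, the hypothesis $\varphi(\langle \{h\} \cup L_\alpha\rangle) \subseteq \bigcup_m K_{\beta_m}$ gives $L_\alpha = \bigcup_m B_m$. Since $L_\alpha$ is homeomorphic to the compact metrizable space $X_\alpha$, the Baire category theorem yields an $m$ and a basic open set $U_s = \{ f_\delta : \delta \supset s\}$ with $U_s \subseteq B_m$, i.e., $\varphi(f_\delta) \in K_{\beta_m}$ whenever $\delta \supset s$.

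Next I would upgrade this local statement to a global one. Given an arbitrary $\delta \in X_\alpha$, define $\delta^* \in X_\alpha$ by $\delta^*(n) = s(n)$ for $n < \len s$ and $\delta^*(n) = \delta(n)$ for $n \geq \len s$, so that $\delta^* \supset s$ and hence $\varphi(f_{\delta^*}) \in K_{\beta_m}$. A direct computation on each $I_n$ shows that $f_\delta f_{\delta^*}^{-1}$ is the identity outside $\bigcup_{n < \len s} I_n$ and equals $\sigma_n^{\delta(n)-s(n)}$ on $I_n$ for $n < \len s$. As $\delta$ ranges over $X_\alpha$, this produces a \emph{finite} set
\[
F = \{ f_\delta f_{\delta^*}^{-1} : \delta \in X_\alpha\} \subseteq \langle L_\alpha \rangle.
\]
Since $F \subseteq \langle \{h\} \cup L_\alpha\rangle$, we have $\varphi(F) \subseteq \bigcup_m K_{\beta_m}$, and because $F$ is finite and $\{\beta_m\}$ is closed under pointwise maxima, there is some $m^*$ with $\varphi(F) \subseteq K_{\beta_{m^*}}$. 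Writing $\varphi(f_\delta) = \varphi(f_\delta f_{\delta^*}^{-1}) \cdot \varphi(f_{\delta^*})$ and applying the closure of $\{\beta_m\}$ under composition (property (2) of Section~\ref{S2}) then gives an index $p$ with $\varphi(L_\alpha) \subseteq K_{\beta_p}$.

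Finally, since $\varphi(h) \in \bigcup_m K_{\beta_m}$, pick $q$ with $\varphi(h) \in K_{\beta_q}$ and use closure under pointwise maxima once more to obtain an $r$ with $\varphi(\{h\} \cup L_\alpha) \subseteq K_{\beta_p} \cup K_{\beta_q} \subseteq K_{\beta_r}$, as desired. The delicate step is the second paragraph: the Baire category theorem only delivers control on one basic open piece $U_s$, so the argument depends on the fact that the product coordinate structure of $L_\alpha$ allows any $f_\delta$ to be corrected into $U_s$ by one of only finitely many group elements — without this, there would be no way to upgrade local containment in $K_{\beta_m}$ to global containment in a single $K_{\beta_p}$.
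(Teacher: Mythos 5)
Your proof is correct and follows essentially the same strategy as the paper's: a Baire category argument to find a basic open piece $U_s$ of $L_\alpha$ mapped into a single $K_{\beta_{m_0}}$, followed by translating an arbitrary $f_\delta$ into $U_s$ by one of finitely many elements of $\langle L_\alpha\rangle$ supported on $\bigcup_{n < \len s} I_n$ (the paper writes these corrections explicitly as products $\sigma_1^{i_1}\cdots\sigma_{\len s}^{i_{\len s}}$), and then invoking the closure of $\{\beta_m\}$ under composition and pointwise maxima. The only cosmetic difference is that you apply the Baire category theorem directly to the compact space $L_\alpha \cong X_\alpha$ rather than to its image $\varphi(L_\alpha)$, which is if anything slightly cleaner.
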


\begin{proof}
As a compact subset of a Polish space, $\varphi (L_\alpha)$ is Polish in its relative topology and thus, by the Baire Category Theorem, there exists an open set $V \subseteq H_+([0,1])$ and $m_0 \in \NN$ such that $\varphi (L_\alpha) \cap V$ is nonempty and $K_{\beta_{m_0}}$ is comeager on $\varphi (L_\alpha) \cap V$.  Since $K_{\beta_{m_0}}$ is a closed set, $K_{\beta_{m_0}}$ must, in fact, contain $\varphi (L_\alpha) \cap V$.  Let $U = \varphi^{-1} (V)$.  It follows that $U \cap L_\alpha$ is nonempty and hence $\varphi (U_s) \subseteq K_{\beta_{m_0}}$, for some finite sequence $s$ which is bitwise bounded by $\alpha$.  Observe that any $f \in L_\alpha$ is of the form 
\begin{equation}\label{E1}
\sigma_1^{i_1} \, \sigma_2^{i_2} \ldots \sigma_{\len s}^{i_{\len s}} g,
\end{equation}
for some $g \in U_s$ and $i_1 , \ldots , i_{\len s} \in \ZZ$, with each $\lvert i_p \rvert \leq 2\alpha(p)$.  Note that
\[
\sigma_1^{i_1}, \sigma_2^{i_2}, \ldots \sigma_{\len s}^{i_{\len s}} \in \langle L_\alpha \rangle,
\]
for all such $i_1 , \ldots , i_{\len s}$.  Since $\varphi (\langle L_\alpha \rangle) \subseteq \bigcup_m K_{\beta_m}$, it now follows from the properties of $\beta_0 , \beta_1 , \ldots$ that there exists $m_1 \in \NN$ such that $K_{\beta_{m_1}}$ contains all elements of the form \eqref{E1}.  In other words, $K_{\beta_{m_1}}$ contains $\varphi (L_\alpha)$.

Finally, since $\varphi h \in \bigcup_m K_{\beta_m}$, there exists $m_2$ with $\varphi h \in K_{\beta_{m_2}}$.  It follows from the closure properties of the $\beta_i$ that $\varphi (\{ h \} \cup L_\alpha) \subseteq K_{\beta_{m_3}}$, for some $m_3 \in \NN$.  This proves the lemma.
\end{proof}

Let $\beta_m$ be as in the previous lemma, with $\varphi (\{ h \} \cup L_\alpha) \subseteq K_{\beta_m}$.  For simplicity, write $\beta = \beta_m$. 

Since the conjugacy class of $\sigma_1$ is dense, if $\varphi \sigma_1 = \id$, then $\varphi f = \id$, for each $f \in H_+([0,1])$.  Indeed, if $\sigma_1 \in \ker (\varphi)$, then $\ker (\varphi)$ contains the entire conjugacy class of $\sigma_1$, as it is a normal subgroup of $H_+([0,1])$.  This implies that $\ker (\varphi) = H_+([0,1])$, since $\ker (\varphi)$ is also closed.  On the other hand, this violates the assumption that $\varphi$ is non-trivial.  Therefore, choose $x_1 \in [0,1]$ and $k \in \NN$ such that
\[
\lvert x_1 - \varphi \sigma_1 (x_1)\rvert \geq 1/k.
\]
Define $x_{n+1} = \varphi h (x_n)$, for each $n \geq 1$, where $h$ is again as above.  Observe that 
\begin{align*}
\lvert x_2 - \varphi \sigma_2 (x_2)\rvert
&= \lvert \varphi h (x_1) - \varphi \sigma_2 \, \varphi h (x_1)\rvert\\
&= \lvert \varphi h (x_1) - \varphi h \, \varphi \sigma_1 \, \varphi h^{-1} \, \varphi h (x_1)\rvert\\
&= \lvert \varphi h (x_1) - \varphi h \, \varphi \sigma_1 (x_1)\rvert\\
&\geq 1 / \beta(k),
\end{align*}
since $\varphi h \in K_\beta$ and hence $\varphi h^{-1}$ is $\beta$-continuous.  Iterating this argument, it follows that
\begin{equation}\label{E2}
\lvert x_n - \varphi \sigma_n (x_n)\rvert \geq 1 / \beta^{n-1} (k),
\end{equation}
for each $n \geq 1$.

Let $n \geq k,m$ and let $p = \alpha (n)$.  Recall that, by the choice of $\alpha$,
\[
p > \beta^n (k).
\]
Consider the points $x_n , \varphi \sigma_n (x_n) , \varphi \sigma_n^2 (x_n) , \ldots , \varphi \sigma_n^p (x_n)$, where $p \geq 1$.  Because $\varphi \sigma_n$ is an increasing homeomorphism of $[0,1]$ and $\varphi \sigma_n (x_n) \neq x_n$, either
\[
x_n < \varphi \sigma_n (x_n) < \varphi \sigma_n^2 (x_n) < \ldots < \varphi \sigma_n^p (x_n)
\]
or 
\[
x_n > \varphi \sigma_n (x_n) > \varphi \sigma_n^2 (x_n) > \ldots > \varphi \sigma_n^p (x_n).
\]
In either case, there must exist $i < p$ such that
\[
\lvert \varphi \sigma_n^i (x_n) - \varphi \sigma^{i+1}_n (x_n)\rvert \leq 1/p,
\]
since the length of $[0,1]$ is $1$.  Since $1/p < 1 / \beta^n (k)$, it follows that $\varphi \sigma_n^{-i}$ is not $\beta$-continuous, since
\[
\lvert \varphi \sigma_n^i (x_n) - \varphi \sigma^{i+1}_n (x_n) \rvert \leq 1/p < 1/\beta^n(k),
\]
but 
\[
\lvert \varphi \sigma_n^{-i} \, \varphi \sigma_n^i (x_n) - \varphi \sigma_n^{-i} \, \varphi \sigma_n^{i+1} (x_n)| = \lvert x_n - \varphi \sigma_n (x_n)\rvert \not < 1/\beta^{n-1}(k),
\]
by \eqref{E2}.  Thus $\varphi \sigma_n^i \notin K_\beta$.  This is a contradiction, since the choice of $\alpha$ implies that $\sigma_n^i \in L_\alpha$ and $\varphi$ maps $L_\alpha$ into $K_\beta$.
\qed



\section{Compactly generated subgroups}

We consider a criterion on a group $G$ which guarantees that the compactly generated subgroups of $G^\omega$ are $\leqg$-cofinal in the $K_\sigma$ subgroups.  We begin with a couple of motivating examples.

\begin{example}
We showed in \cite{BEROS.universal} that every $K_\sigma$ subgroup of $\ZZ^\omega$ is compactly generated.  The argument was similar in character to the proof that $\ZZ$ is a principal ideal domain.  For instance, the countable dense subgroup $\{ x \in \ZZ^\omega : (\forall^\infty n) (x(n) = 0)\}$ is generated by the compact set 
\[
\{ 0^n \concat 1 \concat \bar 0 :  n \in \omega\} \cup \{ \bar 0 \}.
\]
\end{example}

For the sake of the next example, we say that $x$ is {\em divisible in} $H$ (where $H$ is a group), iff, for each $n$, there exists $y \in H$ such that $y^n = x$.

\begin{example}
Consider the Polish group $\QQ^\omega$, where $\QQ$ is equipped with the discrete topology.  There are $K_\sigma$ subgroups, e.g.,
\[
H = \{ x \in \QQ^\omega : (\forall n) (n \geq 1 \implies x(n) = 0)\},
\]
which are $K_\sigma$, but not compactly generated.  In fact, the subgroup $H$ is not $\leqg$-reducible to any compactly generated subgroup of $\QQ^\omega$.  To see this, suppose that $\varphi : \QQ^\omega \rightarrow \QQ^\omega$ is a group homomorphism with $H = \varphi^{-1} (\langle K \rangle)$, for some compact set $K \subseteq \QQ^\omega$.  Note that, since $H$ is a divisible subgroup of $\QQ^\omega$, the subgroup $\langle K \rangle$ must contain elements which are divisible in $\langle K \rangle$.  This, however, is impossible since it would imply that there is some $n \in \omega$ such that $\{ x(n) : x \in K\}$ is infinite.
\end{example}

The results in this section were motivated by the observation that the last example hinges on the fact that there are no compactly generated subgroups, $H$, of $\QQ^\omega$ such that some element of $H$ is divisible in $H$.  On the other hand, for instance, in the permutation group of the natural numbers, there are compactly generated subgroups which have divisible elements.  Consider, for instance, the following.

\begin{example}
For each $n \geq 1$, let 
\[
\pi_n = \prod_{i \in \omega} [2ni , 2ni+2 , \ldots , 2n(i+1) , 2ni+1 , 2in+3 , \ldots , 2n(i+1)+1]
\]
and observe that $(\pi_n)^n = \pi_1$, for each $n \geq 1$.  It follows that $\pi_1$ is divisible in the subgroup generated by the $\pi_n$.  If we let $H$ be the subgroup generated by the $\pi_n$, together with the finite support permutations, we obtain a compactly generated subgroup with a divisible element.  Indeed, $H$ is compactly generated, since it is generated by the compact set
\[
\{ [n , n+1] : n \in \omega\} \cup \{ \pi_n' : n \in \omega\}\cup \{ \mathrm{id} \},
\]
where
\[
\pi_n' = \prod_{i \geq n} [2ni , 2ni+2 , \ldots , 2n(i+1) , 2ni+1 , 2in+3 , \ldots , 2n(i+1)+1].
\]
\end{example}

In turns out that, while there are non-compactly generated $K_\sigma$ subgroups of $S_\infty$, every $K_\sigma$ subgroup of $S_\infty$ is, in fact, $\leqg$-reducible to a compactly generated subgroup.  This is a consequence of Theorem~\ref{T2} below.

\begin{definition}
A subgroup $H$ of a topological group $G$ is {\em almost compactly generated} iff there is a a compact set $K \subseteq G$ and a continuous, injective group homomorphism $\varphi : G \rightarrow G$ such that $H = \varphi^{-1} (\langle K \rangle)$.
\end{definition}

The following theorem and remark together characterize those Polish groups of the form $G^\omega$ in which every $K_\sigma$ subgroup is almost compactly generated.

\begin{theorem}\label{T2}
Let $G$ be a Polish group.  If $G$ has a dense subgroup which is a one-to-one continuous homomorphic pre-image of some compactly generated subgroup of $G^\omega$, then every $K_\sigma$ subgroup of $G^\omega$ is almost compactly generated.
\end{theorem}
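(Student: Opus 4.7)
Let $H \subseteq G^\omega$ be a $K_\sigma$ subgroup, so $H = \bigcup_m A_m$ with each $A_m$ compact; after enlarging, we may assume each $A_m$ is symmetric, contains the identity, and $A_m \subseteq A_{m+1}$. From the hypothesis, fix a continuous injective homomorphism $\eta : G \to G^\omega$ and a compact $K_0 \subseteq G^\omega$ with $D := \eta^{-1}(\langle K_0 \rangle)$ dense in $G$. The plan is to construct a continuous injective homomorphism $\varphi : G^\omega \to G^\omega$ and a compact $K \subseteq G^\omega$ so that $H = \varphi^{-1}(\langle K \rangle)$, by combining a coordinatewise lift of $\eta$ (providing injectivity and a density-backed compact-generation witness) with shifts of $G^\omega$ adapted to the $A_m$ (compactifying the $K_\sigma$ data).

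Concretely, I would first extend $\eta$ to $\bar\eta : G^\omega \to (G^\omega)^\omega \cong G^\omega$ by $\bar\eta(x)(m,n) = \eta(x(m))(n)$, which is a continuous injective homomorphism. A slotted packing of copies of $K_0$ into the slots of $(G^\omega)^\omega$ (shrinking to the identity in the product topology, hence compact) yields a compactly generated subgroup whose preimage through $\bar\eta$ is the dense finitely-supported subgroup $D^{<\omega}$ of $G^\omega$; this converts the density witness in $G$ into compactly generated structure in $G^\omega$. Next, choose a fast enough $N_m \nearrow \infty$ and let $\sigma_m : G^\omega \to G^\omega$ be the shift with $\sigma_m(x)(k) = x(k - N_m)$ for $k \geq N_m$ and $e$ otherwise. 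Because $N_m \to \infty$, the compact sets $\sigma_m(A_m)$ converge to $\{e\}$ in the product topology, so their union with $\{e\}$ is compact in $G^\omega$. Identifying $G^\omega$ with $G^\omega \oplus (G^\omega)^\omega$ via $\omega \cong \omega \sqcup (\omega \times \omega)$, I would set $\varphi(x) = (\bar\eta(x), (\sigma_m(x))_m)$ and assemble $K$ from the two compact ingredients above.

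For the verification $H = \varphi^{-1}(\langle K \rangle)$: if $x \in A_{m_0}$, monotonicity of the $A_m$ gives $\sigma_m(x) \in \sigma_m(A_m)$ for all $m \geq m_0$, handling all but finitely many slots of the shift block, while the $\bar\eta$-component lies in the $K_0$-derived compactly generated piece via density; conversely, for $x \notin H$, the shift block obstructs $\varphi(x)$ from being any finite word in the generators of $K$. I anticipate the main technical obstacle to be the compatibility of the two compact-generation conditions across the two blocks: a weak-direct-sum choice on the shift block forces finite support across slots and collapses the construction to $x = e$, while a Tychonoff-product choice only imposes a uniform word-length bound across slots and pulls back to the too-small compactly generated subgroup $\langle A_0 \rangle$. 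Threading between these, so that the $\bar\eta$-block's bookkeeping (enabled precisely by the density of $D$) compensates for the shift-block's restrictions and the pullback lands exactly on $H$, is the crux of the argument and the place where the hypothesis is indispensable.
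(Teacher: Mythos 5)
Your setup---the coordinatewise lift $\bar\eta$ of the given homomorphism (which is essentially the paper's $\psi^*$) and the observation that a union of compact sets converging to the identity in the product topology is compact---is sound, but the proof has a genuine gap, and it is exactly the one you flag at the end: you never resolve how the two blocks of $\varphi$ interact so that the pullback is $H$ rather than $\{e\}$ or $\langle A_0\rangle$. As you yourself observe, with the shift block alone a weak-direct-sum generating set forces $x=e$ and a full-product generating set forces $x\in\langle A_0\rangle$; declaring that the $\bar\eta$-block will ``compensate'' is not an argument, because in your $\varphi$ the two blocks carry independent data and a single word in the generators must match both blocks simultaneously, so adjoining the $\bar\eta$-block only adds constraints---it cannot loosen the shift-block constraint. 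The shifts themselves are a red herring: pushing $A_m$ out to coordinate $N_m$ makes the union compact precisely by destroying any ability of finite words to recover membership in $\bigcup_m A_m$.

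The missing idea in the paper's proof is a factorization that exploits the density of $D$ \emph{inside} the compactification step rather than in a separate block. For each $n$ one chooses a finite $\tfrac1n$-net $F_n\subseteq D$ for $H_n$ and writes $x\in H_n$ as $x=yz$ with $y\in F_n$ and $z\in \tilde H_n:=\bigcup_{y\in F_n}y^{-1}\cdot\mathrm{cl}(B_{1/n}(y)\cap H_n)$, so that $z$ lies within $\tfrac1n$ of the identity. The reduction is the \emph{diagonal} map $\varphi(x)=\overline{\psi(x)}$, and the compact generating set is $K^*\cup\bigcup_n C_n$, where $K^*$ consists of sequences from $K$ with at most one nontrivial entry and $C_n$ consists of sequences lying in $\psi(\tilde H_n)$ on the first $n$ coordinates and in $\psi(H_n)$ beyond. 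Compactness of $\bigcup_n C_n$ comes from the near-identity entries on the first $n$ coordinates (this replaces your shifts), while the identity $\overline{\psi(x)}=\bigl(\psi(y)^{\,n}\concat\overline e\bigr)\cdot\bigl(\psi(z)^{\,n}\concat\overline{\psi(x)}\bigr)$ shows the two kinds of generators genuinely combine: the finitely supported $\langle K^*\rangle$-factor, available precisely because $y\in D=\psi^{-1}(\langle K\rangle)$, corrects the first $n$ coordinates. The converse inclusion is then an evaluation argument: at a coordinate $i$ beyond the supports of the $K^*$-generators occurring in a word, the word collapses to a word in $\psi(H_{n_0})$, giving $x\in\langle H_{n_0}\rangle\subseteq H$. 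Without this net-and-correction mechanism your plan does not close.
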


\begin{remark*}
Suppose that the hypothesis of Theorem~\ref{T2} fails, i.e., there is no dense subgroup of $G$ which is an injective homomorphic pre-image of some compactly generated subgroup of $G^\omega$.  One can define a $K_\sigma$ subgroup of $G^\omega$ which is not almost compactly generated by taking a dense $K_\sigma$ subgroup $D \subseteq G$ and letting
\[
\tilde D = \{ \xi \in G^\omega : (\forall i) (\xi (i) = \xi(0) \in D)\}.
\]
It follows that $\tilde D$ is not almost compactly generated.  Otherwise, there would exist an injective continuous endomorphism $\varphi : G^\omega \rightarrow G^\omega$ and a compact set $K \subseteq G^\omega$ such that $\tilde D = \varphi^{-1} (\langle K \rangle)$.  Let $\psi : G \rightarrow G^\omega$ be given by $\psi (x) (i) = x$, for all $i$.  Then, contrary to assumption, $D = (\varphi \circ \psi)^{-1} (\langle K \rangle)$.
\end{remark*}

Theorem~\ref{T2} yields the following immediate corollary.

\begin{corollary}\label{C1}
If $G$ is a Polish group with a dense compactly generated subgroup, then every $K_\sigma$ subgroup of $G^\omega$ is almost compactly generated.
\end{corollary}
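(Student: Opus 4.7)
The plan is to derive this corollary directly from Theorem~\ref{T2} by checking that its hypothesis is automatically satisfied when $G$ itself has a dense compactly generated subgroup. So I would begin by fixing a compact set $K \subseteq G$ with $\langle K \rangle$ dense in $G$, and then produce a witness of the form required by Theorem~\ref{T2}: a continuous injective homomorphism $\varphi : G \to G^\omega$ together with a compactly generated subgroup of $G^\omega$ whose $\varphi$-preimage equals the given dense subgroup $\langle K \rangle$.

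The natural candidate for $\varphi$ is the diagonal embedding $\psi : G \to G^\omega$ defined by $\psi(x)(i) = x$ for every $i \in \omega$. This is a continuous injective group homomorphism. Its image $\psi(K)$ is compact in $G^\omega$, and $\langle \psi(K) \rangle$ is therefore a compactly generated subgroup of $G^\omega$. The key verification is that $\psi^{-1}(\langle \psi(K) \rangle) = \langle K \rangle$: one inclusion holds because $\psi$ sends any word in $K^{\pm 1}$ to the same word in $\psi(K)^{\pm 1}$, and the reverse inclusion uses injectivity of $\psi$ together with the fact that every element of $\langle \psi(K) \rangle$ has the form $\psi(g)$ for some $g \in \langle K \rangle$.

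With this identification in hand, $\langle K \rangle$ is exhibited as a dense subgroup of $G$ which is the injective continuous homomorphic preimage of the compactly generated subgroup $\langle \psi(K) \rangle$ of $G^\omega$, verifying the hypothesis of Theorem~\ref{T2}. The conclusion that every $K_\sigma$ subgroup of $G^\omega$ is almost compactly generated follows immediately. No genuine obstacle arises: the entire content is the recognition that the diagonal embedding transports the given dense generator from $G$ into $G^\omega$ without loss of information.
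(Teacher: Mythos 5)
Your proposal is correct and matches the intended derivation: the paper declares Corollary~\ref{C1} an ``immediate'' consequence of Theorem~\ref{T2}, and your diagonal-embedding argument is precisely the routine verification that a dense compactly generated subgroup $\langle K\rangle$ of $G$ is the injective continuous homomorphic preimage of the compactly generated subgroup $\langle \psi(K)\rangle$ of $G^\omega$. The identification $\psi^{-1}(\langle\psi(K)\rangle)=\langle K\rangle$ via $\langle\psi(K)\rangle=\psi(\langle K\rangle)$ and injectivity is exactly right.
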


In general, we can consider other groups besides those of the form $G^\omega$.

\begin{corollary}\label{C2}
Suppose that $G$ is a Polish group such that $G^\omega$ isomorphic to a subgroup of $G$.  If $G$ has a dense almost compactly generated subgroup, then every $K_\sigma$ subgroup of $G$ is almost compactly generated.
\end{corollary}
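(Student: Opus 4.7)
The plan is to reduce to Theorem~\ref{T2} applied to $G$ itself: first I would verify the hypothesis of Theorem~\ref{T2} for $G$ and deduce that every $K_\sigma$ subgroup of $G^\omega$ is almost compactly generated, then transfer this back to $G$ using the embedding $\iota : G^\omega \to G$ provided by the hypothesis. I do not expect any real obstacle; the whole argument should be a bookkeeping composition of injective continuous homomorphisms.

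For the first step, let $D \subseteq G$ be a dense almost compactly generated subgroup, witnessed by an injective continuous homomorphism $\psi : G \to G$ and a compact $K \subseteq G$ with $D = \psi^{-1}(\langle K \rangle)$. Let $\rho : G \to G^\omega$ denote the always-available injective continuous homomorphism $x \mapsto (x,e,e,\ldots)$. Then $\rho \circ \psi : G \to G^\omega$ is an injective continuous homomorphism, and since $\rho$ is an injective homomorphism we have $\rho(\langle K \rangle) = \langle \rho(K) \rangle$ and $(\rho \circ \psi)^{-1}(\langle \rho(K) \rangle) = \psi^{-1}(\langle K \rangle) = D$. This exhibits $D$ as a dense subgroup of $G$ which is a one-to-one continuous homomorphic pre-image of a compactly generated subgroup of $G^\omega$, so Theorem~\ref{T2} applies and every $K_\sigma$ subgroup of $G^\omega$ is almost compactly generated.

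For the second step, I would fix an arbitrary $K_\sigma$ subgroup $L \subseteq G$, say $L = \bigcup_n C_n$ with each $C_n \subseteq G$ compact. Each $\rho(C_n)$ is then compact in $G^\omega$, so $\rho(L) = \bigcup_n \rho(C_n)$ is a $K_\sigma$ subgroup of $G^\omega$. By the first step there exist an injective continuous homomorphism $\varphi' : G^\omega \to G^\omega$ and a compact $K' \subseteq G^\omega$ with $\rho(L) = (\varphi')^{-1}(\langle K' \rangle)$. Setting $\varphi = \iota \circ \varphi' \circ \rho : G \to G$ and $K'' = \iota(K') \subseteq G$, the map $\varphi$ is an injective continuous homomorphism, $K''$ is compact, and $\iota(\langle K' \rangle) = \langle K'' \rangle$ because $\iota$ is a homomorphism. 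A direct pre-image computation, using injectivity of $\iota$, $\varphi'$, and $\rho$ in turn together with $\rho^{-1}(\rho(L)) = L$, yields $\varphi^{-1}(\langle K'' \rangle) = L$, so $L$ is almost compactly generated. The only points to be careful about are that compositions of injective continuous homomorphisms remain such, that $\rho(L)$ remains $K_\sigma$ in $G^\omega$, and that $\iota$ being a homomorphism lets it pass through the subgroup operation; all of the substantive content is already supplied by Theorem~\ref{T2}.
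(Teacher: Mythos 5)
Your proposal is correct and follows essentially the same route as the paper: embed $G$ into $G^\omega$ via $x \mapsto (x,e,e,\ldots)$, apply Theorem~\ref{T2} there, and pull the resulting compactly generated subgroup back through the hypothesized embedding $G^\omega \to G$. The only difference is that you explicitly verify the hypothesis of Theorem~\ref{T2} by composing $\psi$ with the coordinate embedding, a step the paper's proof leaves implicit.
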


\begin{proof}
Let $\varphi_1 : G \rightarrow G^\omega$ and $\varphi_2 : G^\omega \rightarrow G$ be isomorphic embeddings.  Fix a $K_\sigma$ subgroup $H \subseteq G$.  By Theorem~\ref{T2}, there exists a compact set $K \subseteq G^\omega$ and a continuous injective homomorphism $\varphi : G^\omega \rightarrow G^\omega$ such that $\varphi_1 (H) = \varphi^{-1} (\langle K \rangle)$.  It now follows from the injectivity of $\varphi_2$ that 
\[
H = (\varphi_2 \circ \varphi \circ \varphi_1)^{-1} (\langle \varphi_2 (K) \rangle).
\]
In other words, $H$ is almost compactly generated.
\end{proof}

\begin{proof}[Proof of Theorem~\ref{T2}]
We start with a special case.  Assume there is a dense subgroup $D \subseteq G$ and a continuous, injective homomorphism $\psi : G \rightarrow G$ such that $D = \psi^{-1} (\langle K \rangle)$, for some compact set $K \subseteq G$.  We will show that every $K_\sigma$ subgroup of $G$ is a continuous homomorphic preimage of some compactly generated subgroup of $G^\omega$.  We let $e$ denote the identity element of $G$.

To this end, fix a $K_\sigma$ subgroup $H = \bigcup_n H_n \subseteq G$, with each $H_n$ compact.  For each $n$, let $F_n \subseteq D$ be a finite $\frac{1}{n}$-net for $H_n$.  Let
\[
\tilde H_n = \bigcup_{y \in F_n} y^{-1} \cdot {\rm cl} (B_{\frac{1}{n}} (y) \cap H_n)
\]
and define $C_n \subseteq G^\omega$ to be the set of $\xi \in G^\omega$ such that
\begin{enumerate}
\item $(\forall i < n) (\xi(i) \in \psi (\tilde H_n))$ and
\item $(\forall i \geq n) (\xi (i) \in \psi(H_n))$.
\end{enumerate}
As a product of compact sets, each $C_n$ is a compact subset of $G^\omega$.  In fact, $\bigcup_n C_n$ is itself compact.  To see this, suppose that $\xi_0 , \xi_1 , \ldots \in \bigcup_n C_n$.  In the first place, suppose that there is an $m$ such that infinitely many $\xi_n \in C_m$.  In this case, there is a convergent subsequence of $(\xi_n)$, since each $C_m$ is compact.  On the other hand, suppose that, for each $m$, there are only finitely many $n$ such that $\xi_n \in C_m$.  Extract a subsequence $n_0 < n_1 < \ldots$ such that $\xi_{n_p} \in C_p$, for each $p \in \omega$.  For each $p$, let $\eta_p \in G^\omega$ be such that
\[
\xi_{n_p} (i) = \psi (\eta_p (i)),
\]
for each $i \in \omega$.  By the injectivity of $\psi$, we must have 
\[
\eta_p (i) \in \tilde H_{n_p} \subseteq B_{\frac{1}{n_p}} (e),
\]
for each $p$ and $i < p$.  It follows that $\eta_p \rightarrow \overline e$, as $p \rightarrow \infty$.  Hence, $\xi_{n_p} \rightarrow \overline e$, since $\psi$ is a continuous homomorphism.  

Now define a compact set $K^* \subseteq G^\omega$ by
\[
K^* = \{ \xi \in K^\omega : (\exists^{\leq 1} i) (\xi (i) \neq e)\}
\]
and let $\varphi : G \rightarrow G^\omega$ be given by $\varphi (x) = \overline{\psi(x)}$.  We check that 
\[
H = \varphi^{-1} (\langle K^* \cup \bigcup_n C_n\rangle ).
\]

In the first place, suppose that $x \in H$, say $x \in H_n$.  Let $y \in F_n$ and $z \in \tilde H_n$ be such that $y^{-1} x = z$.  It follows that 
\begin{align*}
\varphi(x)
&= \overline{\psi(x)}\\
&= \underbrace{\left( (\psi(y))^n \concat \overline e\right)}_{\in \langle K^* \rangle} \cdot \underbrace{\left( (\psi(z))^n \concat \overline{\psi(x)} \right).}_{\in C_n}\\
\end{align*}
Hence, $\varphi(x) \in \langle K^* \cup \bigcup_n C_n\rangle$.

On the other hand, suppose that $\varphi (x) \in \langle K^* \cup \bigcup_n C_n\rangle$.  Let $n_0 \in \omega$ and $w$ be a group word, with $\xi_0 , \ldots \xi_k \in C_{n_0}$ and $\eta_0 \ldots , \eta_p \in K^*$ such that 
\[
\varphi (x) = w(\xi_0 , \ldots , \xi_k , \eta_0 , \ldots , \eta_p).
\]
Let $i \geq n_0$ be large enough that 
\[
\eta_0 (i) = \ldots = \eta_p (i) = e.
\]
It follows that 
\begin{align*}
\psi (x) 
&= \varphi (x) (i)\\
&= w (\xi_0 (i) , \ldots , \xi_k (i) , \eta_0 (i) , \ldots , \eta_p (i))\\
&= w (\xi_0 (i) , \ldots , \xi_k (i) , e , \ldots , e) \in \langle \psi (H_{n_0}) \rangle.
\end{align*}
Since $\psi$ is injective, it follows that $x \in \langle H_{n_0} \rangle$.

We now turn to the general case in which there is a dense subgroup $D \subseteq G$, a compact set $K \subseteq G^\omega$, and an injective homomorphism $\psi : G \rightarrow g^\omega$ such that $H = \psi^{-1} (\langle K \rangle)$.

Consider the subgroup
\[
D^* = \{ \xi \in D^\omega : (\forall^\infty i) (\xi (i) = e)\}
\]
and observe that $D^*$ is dense in $G^\omega$.  Define $\psi* : G^\omega \rightarrow (G^\omega)^\omega$ by 
\[
\psi^* (\xi) (n,i) = \psi (x) (i).
\]
Now let
\[
K^* = \{ \eta \in K^\omega : (\exists^{\leq 1} n) (\eta (n) \neq \overline e)\}.
\]
Note that $K^*$ is compact and $D^* = (\psi^*)^{-1} (\langle K^* \rangle)$.  We may therefore apply the special case above to the group $G^\omega$ and use the fact that $G^\omega \cong (G^\omega)^\omega$ to complete the proof.
\end{proof}

\bibliographystyle{plain}

\end{document}